\documentclass{amsart}
\usepackage{amssymb, amsmath, amsfonts, amsthm}
\usepackage{graphics, enumerate, mathrsfs, mathtools,tikz-cd,soul,csquotes,dsfont}
\usetikzlibrary{positioning,arrows,scopes}
\usepackage[color=red!40]{todonotes}
\usepackage{bm,dsfont}

\definecolor{cadmiumgreen}{rgb}{0.0, 0.42, 0.24}
\usepackage[
	colorlinks, citecolor=cadmiumgreen,
	pagebackref,
	pdfauthor={Harry Richman, Farbod Shokrieh, and Chenxi Wu}, 
	pdfstartview ={FitV},
	bookmarksdepth=3,
]{hyperref}
\hypersetup{
	pdftitle={On determinants of resistance matrices},
}

\usepackage[
	backrefs,
	nobysame,
	lite,
	non-sorted-cites,
]{amsrefs} 

\newtheorem{thm}{Theorem}
\newtheorem*{thm*}{Theorem}

\newtheorem{prop}[thm]{Proposition}

\newtheorem{cor}[thm]{Corollary}

\theoremstyle{definition}

\newtheorem{dfn}[thm]{Definition}
\newtheorem{eg}[thm]{Example}
\newtheorem{rmk}[thm]{Remark}

\newcommand{\RR}{\mathbb{R}}

\newcommand{\bone}{\mathbf{1}}
\newcommand{\boldmu}{\bm{\mu}}
\newcommand{\boldtau}{\bm{\tau}}

\newcommand{\boldu}{\mathbf{u}}

\DeclareMathOperator{\cof}{cof}
\newcommand{\trees}{\mathcal{T}}
\newcommand{\forests}{\mathcal{F}}

\newcommand{\tr}{\intercal}

\begin{document}
\title{On determinants of resistance matrices}
\author{Harry Richman}
\author{Farbod Shokrieh}
\author{Chenxi Wu}
\date{November 29, 2025.}
%\thanks{This work was partially supported by ....}

\begin{abstract}
We prove a new combinatorial identity for the determinant 
of the resistance matrix of a finite graph, 
which involves counts of spanning trees and forests.
This generalizes a result of Graham and Pollak on distances matrices of trees.
We make use of Bapat's expression of the resistance matrix determinant as a linear algebraic quantity.
\end{abstract}

\keywords{effective resistance, spanning tree, spanning forest}
\subjclass{05C50, 05C30, 05C12}

\maketitle

%\setcounter{tocdepth}{1}
%\tableofcontents

\section{Introduction}

Suppose $G = (V,E)$ is a finite connected graph.
We allow $G$ to have parallel edges, but not loops.
The {\em resistance matrix} of $G$ is defined as the matrix in $\RR^{|V| \times |V|}$ 
having entries
\[
    R_{ij} = \text{effective resistance between $v_i$ and $v_j$ in $G$},
\]
when $G$ is considered as an electrical network with a unit resistor on each edge.
% Let $D$ denote the distance matrix of $G$.
Combinatorially, this quantity is equal to a ratio of spanning tree counts~\cite{kirchhoff},
\[
    R_{ij} = \frac{\kappa(G/ij)}{\kappa(G)} ,
    % = \frac{\kappa_2(v_i | v_j)}{\kappa(G)} ,
\]
where $\kappa(G)$ denotes the number of spanning trees
and $G/ij$ denotes the graph obtained by identifying (or ``gluing together'') vertices $v_i$ and $v_j$.

In this paper, we provide a combinatorial formula for the determinant of the resistance matrix.
Given a finite graph $G$, 
% let $\kappa(G)$ be as above, and %denote the number of spanning trees, and 
let $\kappa_2(G)$ denote the number of two-component spanning forests.

\begin{thm}
\label{thm:main}
Let $G = (V, E)$ be a graph with $n$ vertices 
and resistance matrix $R$.
Then
\begin{equation}\label{eq:main}
\det R = \frac{(-1)^{n-1} {2^{n}}}{\kappa(G)} \left( \frac{1}{3}  \frac{\kappa_2(G)}{\kappa(G)}  - \frac1{12} \sum_{e \in E}  
% R_{e^+ \!, e^-}^2 \right) 
\frac{\kappa (G/ e)^2}{\kappa(G)^2}   \right)
\end{equation}
%where $F(e) = 1 - r(e^+, e^-)$ is the Foster coefficient.
where 
$G/ e$ is the edge contraction of $G$ at $e$. 
\end{thm}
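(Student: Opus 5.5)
The plan is to go through Bapat's linear-algebraic description of $R$ and reduce \eqref{eq:main} to an identity among effective resistances. Write $L$ for the Laplacian and $r_e = R_{e^+e^-} = \kappa(G/e)/\kappa(G)$. Define
\[
\tau_i = 2 - \sum_{e \ni v_i} r_e .
\]
I take as input Bapat's results (standard, and assumed here): $R^{-1} = -\tfrac12 L + \frac{\tau\tau^{\tr}}{\tau^{\tr}R\tau}$ and
\[
\det R = (-1)^{n-1}\, 2^{n-3}\, \frac{\tau^{\tr} R\tau}{\kappa(G)} .
\]
As a sanity check, for a tree $\tau_i = 2-\deg v_i$ and $\tau^{\tr}D\tau = 2(n-1)$, which recovers Graham--Pollak. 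The right-hand side of \eqref{eq:main} also reduces to $(-1)^{n-1}2^{n-2}(n-1)$ there, since $\kappa_2 = n-1$ and every $r_e=1$. Comparing the two formulas, the theorem is therefore equivalent to
\[
\tau^{\tr}R\tau \;=\; \frac{8}{3}\,\frac{\kappa_2(G)}{\kappa(G)} \;-\; \frac{2}{3}\sum_{e\in E} r_e^2 .
\]

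To compute $\tau^{\tr}R\tau$, write $R = d\bone^{\tr} + \bone d^{\tr} - 2L^+$ with $d = \operatorname{diag}(L^+)$. Foster's theorem $\sum_e r_e = n-1$ gives $\bone^{\tr}\tau = 2$, hence
\[
\tau^{\tr}R\tau = 4\, d^{\tr}\tau - 2\,\tau^{\tr}L^+\tau .
\]
Next I write $\tau = 2\bone - |B|\,r$, where $|B|$ is the unsigned vertex--edge incidence matrix. Using $L^+\bone = 0$, everything becomes a combination of three kinds of quantities:
\begin{itemize}
\item traces such as $\operatorname{tr}L^+$;
\item the sums $\sum_e r_e\,(L^+_{e^+e^+}+L^+_{e^-e^-})$;
\item the quadratic form $\sum_{e,f} r_e r_f\, (|b_e|^{\tr}L^+|b_f|)$.
\end{itemize}
The unsigned vectors $|b_e| = \mathbf{e}_{e^+}+\mathbf{e}_{e^-}$ should be rewritten using $|b_e| = b_e + 2\mathbf{e}_{e^-}$, so that the transfer-current entries $Y_{ef} = b_e^{\tr}L^+ b_f$ appear. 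Note that $Y_{ee} = r_e$, and $\sum_f Y_{ef}^2 = r_e$ because $Y$ is a projection.

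The combinatorial input is an expression of $\kappa_2(G)/\kappa(G)$ in these same terms. I would obtain it by counting pairs (spanning tree $T$, edge $e\in T$): deleting $e$ from $T$ gives a two-component forest. One then uses the transfer-current theorem, $\Pr[e,f\in T] = r_er_f - Y_{ef}^2$, together with the matrix-tree theorem for rooted forests ($\kappa(G/ij)$ counts the $2$-forests separating $v_i$ and $v_j$). The goal is a formula of the shape
\[
\frac{\kappa_2}{\kappa} = \text{(linear combination of } \operatorname{tr}L^+,\ d^{\tr}|B|r,\ \textstyle\sum_e r_e^2,\ \sum_{e,f}Y_{ef}^2\text{-type terms)}.
\]
Substituting this into the expression for $\tau^{\tr}R\tau$ should make the $L^+$-terms cancel, leaving exactly $\tfrac83\kappa_2/\kappa - \tfrac23\sum_e r_e^2$.

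I expect this last step, relating $\kappa_2$ to $L^+$-data in precisely the combination that appears in $\tau^{\tr}R\tau$, to be the main obstacle. The difficulty is that $\kappa_2$ counts forests without any weighting by component sizes, whereas the natural Kirchhoff-type identities weight each forest $F=(A,B)$ by $|A||B|$ or by the number of crossing edges. My first attempt would be to verify the desired identity directly on small examples (a cycle $C_n$ and the multigraph with two parallel edges) in order to pin down the correct linear combination. After that I would seek a double-counting proof in which the factor $\tfrac13$ arises from summing over ordered triples, such as (tree, edge, endpoint). If that fails, I would fall back on an induction via deletion--contraction on a non-bridge edge, tracking how $\det R$, $\kappa$, $\kappa_2$ and the $r_e$ change. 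The base case is trees, where Graham--Pollak applies.
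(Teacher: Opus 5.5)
\textbf{There is a genuine gap: the key identity is reduced to, not proved.}

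Your reduction is correct, and it is exactly the paper's first step. With $\boldtau = 2\boldmu$, Bapat's formula $\det R = (-1)^{n-1}2^{n-3}\,\boldtau^\tr R\boldtau/\kappa(G)$ turns \eqref{eq:main} into
\[
\boldtau^\tr R\boldtau \;=\; \frac{8}{3}\,\frac{\kappa_2(G)}{\kappa(G)} \;-\; \frac{2}{3}\sum_{e\in E} r_e^2 .
\]
Your auxiliary facts are also fine: $\bone^\tr\boldtau=2$ by Foster, $\boldtau^\tr R\boldtau = 4d^\tr\boldtau - 2\boldtau^\tr L^+\boldtau$, $Y_{ee}=r_e$, $\sum_f Y_{ef}^2=r_e$, and the tree check.

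The problem is that this displayed identity is the entire combinatorial content of the theorem, and you do not prove it. The formula for $\kappa_2/\kappa$ in terms of $L^+$-data is never written down. The cancellation of the $L^+$-terms is only expected. The rest is a list of strategies you would try. As you note yourself, the tools you list only produce forest counts weighted by the number $c(F)$ of crossing edges, or by $|A||B|$ (Kirchhoff index). For example, double counting pairs $(T,e)$ gives $\sum_F c(F)=(n-1)\kappa(G)$, which is Foster's theorem again. Nothing in the plan extracts the unweighted count $\kappa_2(G)=\sum_F 1$ or explains where the coefficient $\frac13$ comes from.

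\textbf{How the paper handles this step.} The paper does not do it by hand either; it imports it from \cite{RSW-two-forest}:
\begin{itemize}
\item $\boldmu^\tr R\boldmu = 2\gamma(G)$, where $2\gamma(G)$ is the maximum of $\boldu^\tr R\boldu$ over $\bone^\tr\boldu=1$ (\cite[Proposition 5.3]{RSW-two-forest});
\item $\gamma(G)=\frac13\frac{\kappa_2(G)}{\kappa(G)}-\frac1{12}\sum_e r_e^2$ (\cite[Theorem A]{RSW-two-forest}), which is proved there by potential-theoretic methods.
\end{itemize}
To complete your argument you must either cite these or prove an identity of the same strength.

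\textbf{A more concrete target.} Combining Theorem A with $\gamma(G)=\frac14\sum_e\bigl(r(e^+,q)-r(e^-,q)\bigr)^2$ (\cite[Proposition 5.4(c)]{RSW-two-forest}), the missing step is equivalent to
\[
\frac{\kappa_2(G)}{\kappa(G)} \;=\; \frac34\sum_{e\in E}\bigl(r(e^+,q)-r(e^-,q)\bigr)^2+\frac14\sum_{e\in E} r_e^2
\]
for any fixed vertex $q$. Multiplied by $\kappa(G)^2$, this becomes a quadratic identity. On one side are pairs (spanning tree, two-forest). On the other is a signed count of pairs of two-forests crossed by a common edge, since $\kappa(G)\bigl(r(e^+,q)-r(e^-,q)\bigr)$ counts two-forests crossed by $e$, signed by which side contains $q$. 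A bilinear statement of this kind is what your plan would actually have to establish. A single linear double count such as the one over pairs $(T,e)$ does not reach it.
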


This theorem is proved by combining two existing results in the literature: one by Bapat~\cite{bapat-resmatrix} on the determinant of the resistance matrix,
and one by the current authors~\cite{RSW-two-forest} on identities involving counts of spanning forests and resistances.
For some examples of resistance matrices and their determinants, see \S\ref{sec:examples}.

% Alternative form of theorem:
We also give a second alternative expression for the determinant of the resistance matrix that is less explicitly combinatorial.
\begin{thm}\label{thm:main-alt}
Let $R$ be the resistance matrix of a graph $G = (V, E)$ with $n$ vertices.
Then for any vertex index $k \in \{1, \ldots, n\}$, %vertex $q \in V$,
\begin{equation}\label{eq:main-alt}
	\det R = \frac{(-1)^{n - 1} 2^{n - 2}}{\kappa(G)} \sum_{\substack{e \in E \\ e = (i, j)}} \left(R_{ik} - R_{jk}\right)^2 . 
	%(R_{e^+ \!, q} - R_{e^- \!, q})^2 .
\end{equation}
\end{thm}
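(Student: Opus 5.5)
The plan is to combine Bapat's formula for $\det R$ from \cite{bapat-resmatrix} with two linear identities relating $R$ to the Laplacian $L$, and then reduce everything to a quadratic form in a single column of $R$.

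\textbf{Inputs from Bapat.} Define the vector $\tau\in\RR^n$ by $\tau_i = 2 - \sum_{j\sim i} R_{ij}$, where the sum runs over edges at $v_i$, counted with multiplicity. I will take three facts from \cite{bapat-resmatrix}:
\begin{itemize}
\item $\det R = (-1)^{n-1}2^{n-3}\,\tau^{\tr}R\tau/\kappa(G)$;
\item the matrix identity $LR = -2I + \tau\bone^{\tr}$, and hence, by symmetry, $RL = -2I + \bone\tau^{\tr}$;
\item $\bone^{\tr}\tau = 2$.
\end{itemize}
The last fact is Foster's theorem $\sum_{e\in E} R_e = n-1$ rewritten. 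As a sanity check on the normalization, for a tree $\tau_i = 2-\deg v_i$ and $\tau^{\tr}D\tau = 2(n-1)$, which recovers Graham--Pollak. Given these facts, Theorem~\ref{thm:main-alt} reduces to the identity
\[
\tau^{\tr}R\tau = 2\sum_{e=(i,j)\in E}(R_{ik}-R_{jk})^2 .
\]

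\textbf{Reduction to one column of $R$.} Fix $k$ and let $r = Re_k$ be the $k$-th column of $R$. Applying $LR = -2I+\tau\bone^{\tr}$ to $e_k$ gives $Lr = -2e_k+\tau$, that is,
\[
\tau = Lr + 2e_k .
\]
Substituting this into one factor of the quadratic form gives
\[
\tau^{\tr}R\tau = \tau^{\tr}RLr + 2\,\tau^{\tr}Re_k .
\]

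\textbf{The first term vanishes.} Using $RL = -2I+\bone\tau^{\tr}$, we get $\tau^{\tr}RL = -2\tau^{\tr} + (\tau^{\tr}\bone)\tau^{\tr}$. Since $\tau^{\tr}\bone = 2$, this equals $-2\tau^{\tr}+2\tau^{\tr}=0$, so $\tau^{\tr}RLr = 0$.

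\textbf{The second term.} Substituting $\tau = Lr+2e_k$ once more,
\[
2\,\tau^{\tr}Re_k = 2\,\tau^{\tr}r = 2\,r^{\tr}Lr + 4R_{kk} .
\]
Here $R_{kk}=0$, and $r^{\tr}Lr = \sum_{e=(i,j)}(R_{ik}-R_{jk})^2$ is the usual edge-sum expression for the Laplacian quadratic form. This gives the claimed identity, and multiplying by $(-1)^{n-1}2^{n-3}/\kappa(G)$ yields \eqref{eq:main-alt}.

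\textbf{Main obstacle.} The step I expect to need the most care is pinning down the exact form and constants of Bapat's results: the power of $2$ in the determinant formula, the sign convention in $LR = -2I+\tau\bone^{\tr}$, and the normalization $\bone^{\tr}\tau=2$. If these are not stated in usable form, I would rederive them. The matrix identity follows from writing $R_{ij} = L^+_{ii}+L^+_{jj}-2L^+_{ij}$ and using $LL^+ = I - \tfrac1n\bone\bone^{\tr}$. The normalization follows from Foster's theorem.
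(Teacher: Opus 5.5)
Your proof is correct, and the constants check out. $LR=-2I+\tau\bone^{\tr}$ holds with $\tau=2\boldmu$, Foster gives $\bone^{\tr}\tau=2$, and $\tau^{\tr}R\tau=2\,r^{\tr}Lr$ combined with Bapat's factor $(-2)^{n-3}/\kappa(G)$ gives exactly \eqref{eq:main-alt}.

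The paper starts from the same place (Bapat's determinant formula) but handles the key step differently. It identifies $\boldmu^{\tr}R\boldmu$ with $2\gamma(G)$, the maximum of $\boldu^{\tr}R\boldu$ on the hyperplane $\bone^{\tr}\boldu=1$. It then quotes $\gamma(G)=\frac14\sum_{e}(r(e^+,q)-r(e^-,q))^2$ from \cite[Proposition 5.4 (c)]{RSW-two-forest} without proof. Your substitution $\tau=Lr+2e_k$, followed by the vanishing of $\tau^{\tr}RL$, is in effect a self-contained proof of that cited proposition. The vanishing is just Proposition~\ref{prop:curv-res}(a) in matrix form, since $LR\tau=0$ means $R\tau\in\mathrm{span}\,\bone$.

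Your route is more transparent and independent of \cite{RSW-two-forest}. It shows directly why every column works: $\tau=Lr+2e_k$ holds for each $k$, and $R_{kk}=0$. It also carries over unchanged to edge resistances $\alpha_e$ once $L$ is the Laplacian with conductances $1/\alpha_e$. Given the weighted form of Bapat's formula, this produces the $1/\alpha_e$ factors in \eqref{eq:w-main-alt}.

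The paper's framing buys uniformity. Theorems~\ref{thm:main} and~\ref{thm:main-alt} become two evaluations of the single invariant $\gamma(G)$. The two-forest expression in Theorem~\ref{thm:main} genuinely needs the combinatorial Theorem A of \cite{RSW-two-forest}, which your linear algebra does not reach.

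One detail if you rederive the matrix identity from $L^+$. That computation gives $LR=-2I+\tau'\bone^{\tr}$ with $\tau'=Ld+\frac{2}{n}\bone$, where $d_i=L^+_{ii}$. You still need to compare diagonal entries, $(LR)_{ii}=-\sum_{j\sim i}R_{ij}$, to conclude that $\tau'=\tau$.
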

Here we use ``$e = (i, j)$'' to mean that $e$ connects vertices $v_i$ and $v_j$, by abuse of notation.
This result states that, surprisingly, the computation of $\det R$ may be reduced to an expression using only entries of any {\em single column} of $R$, up to the term $\kappa(G)$.

For a general graph $G$, the off-diagonal entries of the resistance matrix are fractions with denominator $\kappa(G)$. 
Thus, we would expect a priori that $\det R$ has $\kappa(G)^n$ as its denominator.
Our theorems show that, to the contrary, the denominator of $\det R$ is at most $\kappa(G)^3$.
This property of the denominator of $\det R$ also extends to weighted graphs, if $\kappa(G)$ is replaced with the spanning tree polynomial $\kappa(G, \alpha)$; see \S\ref{sec:weighted}.
This unexpected cancellation was observed experimentally by Faber \cite[Remark 4.7]{faber},
in connection to arithmetic geometry.

From a computational perspective, the expression \eqref{eq:main-alt} is simpler than \eqref{eq:main} because it avoids the $\kappa_2(G)$ term---in fact, an efficient way to compute the number of two-forests $\kappa_2$ for a general graph, over a brute-force search, is to take advantage of the equality between \eqref{eq:main} and \eqref{eq:main-alt}.

\begin{rmk}
If $G$ is a tree, the effective resistance coincides with shortest-path distance.
Graham and Pollak~\cite{graham-pollak} proved that, in this case,
\begin{equation}\label{eqn:tree-det}
\det R = (-1)^{n-1} 2^{n-2} (n-1). 
\end{equation}
This identity shows that the determinant depends on surprisingly little information of the underlying tree.
Their result led to a large amount of subsequent investigations on tree distance matrices and their generalizations.

We can obtain \eqref{eqn:tree-det} from Theorem~\ref{thm:main} since, for a tree, we have $\kappa(G) = 1$
%$\kappa_2(G) = n - 1$, 
and 
\[
\frac{1}{3}  {\kappa_2(G)} - \frac1{12} \sum_{e \in E(G)} {\kappa (G/ e)^2} = \frac13 (n - 1) - \frac{1}{12} (n - 1) .
\]
It is also clear that \eqref{eqn:tree-det} follows from Theorem~\ref{thm:main-alt}, since for a tree $(R_{ik} - R_{jk})^2 = 1$
given any edge $e = (i, j)$.
\end{rmk}

%This identity was motivated by a problem in data communication,
%and inspired much further research on distance matrices.
%%Self-contained proofs are also given in \cite{du-yeh,yan-yeh}.
%A weighted version of \eqref{eqn:full-det} was proved by Bapat--Kirkland--Neumann~\cite{bapat-kirkland-neumann}.

\begin{rmk}
In other recent work~\cite{RSW-tree-distance}, we generalize the Graham--Pollak tree identity \eqref{eqn:tree-det} to identities for any principal minor of the distance matrix.

There is also an identity for the principal minors of the resistance matrix, in the ongoing work \cite{richman-shokrieh-wu}.
This result is more technical to state, and is motivated by questions from arithmetic geometry~\cite{CR}.
% \cite{baker-rumely}.
\end{rmk}

\begin{rmk}
One particular generalization of \eqref{eqn:tree-det}, found by Bapat--Kirkland--Neumann~\cite{bapat-kirkland-neumann}, is the following identity for the distance matrix of an edge-weighted graph.
If the distance matrix $D$ takes into account positive real weights on edges $\{\alpha_e : e \in E\}$, then
\begin{equation}\label{eqn:w-tree-det}
	\det D = (-1)^{n-1} 2^{n-2} \prod_{e \in E} \alpha_e \sum_{e \in E} \alpha_e .
\end{equation}
There are analogous weighted versions of Theorem~\ref{thm:main} and \ref{thm:main-alt}, see \S\ref{sec:weighted}.
\end{rmk}

%\subsection{Further questions and related work}

%\begin{rmk}
%Discuss -- expression in main thm ``makes sense'' for matroids -- does it mean anything?
%
%For a matroid $M$, on the ground set $E$, let $\kappa(M)$ denote the number of bases.
%Let $\mathrm{tc}_1(M)$ denote the 1-truncation of $M$, whose bases are the independent sets in $M$ of size $\mathrm{rk}(M) - 1$.
%Consider the quantity
%\[
%x(M) = 4 \kappa(M) \kappa(\mathrm{tc}_1(M)) - \sum_{e \in E} \kappa(M / e)^2 .
%\]
%\end{rmk}

\subsection{Organization}

In \S\ref{sec:resistance} we review facts about effective resistance and the resistance curvature of a graph.
In \S\ref{sec:proofs} we prove our main results.
In \S\ref{sec:examples} we provide some examples of resistance matrices of specific graphs.

\section{Effective resistance and graph curvature}\label{sec:resistance}

When working with a graph $G = (V, E)$, we implicitly assume that the vertices are ordered, $V = \{v_1, \ldots, v_n\}$,
and that the edges come equipped with a chosen orientation, and we write $e^+, e^-$ for the endpoints of the edge $e$.
The choice of order and orientation is arbitrary, and will not have any effect on the results.

For notation, we use $r(v_i, v_j)$ to denote the effective resistance between vertices $v_i$ and $v_j$.
The {\em resistance matrix} is the matrix $R$ in $\RR^{|V| \times |V|}$ with entries
$
R_{ij} = r(v_i, v_j).
$
Unless noted otherwise, when discussing effective resistance we always assume that each edge has unit resistance.
As mentioned in the introduction,
\[
	r(v_i, v_j) = \frac{\kappa(G / ij)}{\kappa(G)} ,
\]
where $G / ij$ denotes the graph obtained from gluing together vertices $v_i$ and $v_j$.
For a reference, see Kirchhoff~\cite{kirchhoff} or Biggs~\cite[Section 17]{biggs} for a more modern treatment.
For an edge $e$, the effective resistance between its endpoints is
\begin{equation}\label{eq:res-trees-edge}
	r(e^+, e^-) = \frac{\kappa(G / e)}{\kappa(G)}
\end{equation}
where $G / e$ is the edge contraction; see e.g. \cite[Proposition 17.1]{biggs}.

\subsection{Graph curvature}\label{sec:curvature}

Here we recall the definition of a vector in $\RR^{|V|}$ that has a special relation to the resistance matrix.
The following terminology is due to Devriendt--Lambiotte~\cite[Definition 1, p. 5]{devriendt-lambiotte}.
The same vector, up to a scaling, appeared earlier in Bapat~\cite[p. 76]{bapat-resmatrix}.

\begin{dfn}
The {\em resistance curvature} of a graph $G = (V, E)$ is the vector  $\boldmu \in \RR^{|V|}$ with components
	\begin{equation}
	\boldmu_i = 1 - \frac{1}{2} \sum_{e \in N(v_i)} r(e^+, e^-),
	\end{equation}
where $N(v_i)$ denotes the set of edges incident to vertex $v_i$.
\end{dfn}

We will often abbreviate ``resistance curvature'' to just ``curvature'' of $G$.

\begin{prop}\label{prop:curv-res}
The {curvature} $\boldmu$ of a graph $G$ is the unique vector $\boldmu \in \RR^{|V|}$ that satisfies the two conditions
\begin{enumerate}[(a)]
\item $R \boldmu = \lambda \bone$ for some real number $\lambda$;
\item $\bone^\tr \boldmu = 1$, i.e. the entries of $\boldmu$ sum to one.
\end{enumerate}
\end{prop}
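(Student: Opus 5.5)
We prove existence by checking that the curvature vector satisfies (a) and (b), and uniqueness by showing that $R$ is nonsingular. The tool is the Laplacian $L$ together with its Moore--Penrose pseudoinverse $L^+$. Recall the standard identity
\[
R_{ij} = L^+_{ii} + L^+_{jj} - 2L^+_{ij},
\]
that is, $R = d\bone^\tr + \bone d^\tr - 2L^+$ with $d = \operatorname{diag}(L^+)$.

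\emph{Step 1: a Laplacian identity.} The key computation is
\[
L R = -2I + \tfrac{2}{n}\, J + (L d)\bone^\tr ,
\]
where $J = \bone\bone^\tr$. It follows from $L\bone = 0$ and $LL^+ = I - \tfrac1n J$.

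Define $\boldmu$ as in the definition, and set $\tau = \tfrac12 L d + \tfrac1n \bone$. The plan is to show $\tau = \boldmu$. Compute
\[
(Ld)_i = \sum_{e \ni v_i} \bigl(L^+_{ii} - L^+_{jj}\bigr),
\]
where $j$ is the other endpoint of $e$. Summing instead
\[
r(e^+,e^-) = L^+_{ii} + L^+_{jj} - 2L^+_{ij}
\]
over the edges at $v_i$ gives
\[
\sum_{e \ni v_i} r(e^+,e^-) = 2\deg(v_i)\, L^+_{ii} - (Ld)_i - 2\bigl(LL^+\bigr)_{ii} + 2\deg(v_i)\, L^+_{ii} - \dots
\]
This is easier to organize by working with the whole matrix identity. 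Write the $i$th diagonal entry of $L R$ in two ways:
\begin{itemize}
\item directly, $(LR)_{ii} = \sum_{e \ni v_i} (R_{ii} - R_{ji}) = -\sum_{e\ni v_i} r(e^+,e^-)$, since $R_{ii} = 0$;
\item from Step 1, $(LR)_{ii} = -2 + \tfrac2n + (Ld)_i$.
\end{itemize}
Comparing the two gives
\[
\boldmu_i = 1 - \tfrac12\sum_{e \ni v_i} r(e^+,e^-) = \tfrac1n + \tfrac12 (Ld)_i ,
\]
so $\boldmu = \tau$.

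\emph{Step 2: condition (b).} Since $\bone^\tr L = 0$, we get $\bone^\tr\boldmu = \bone^\tr \tau = n \cdot \tfrac1n = 1$.

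\emph{Step 3: condition (a).} Rewrite Step 1 as
\[
LR = -2I + 2\,\boldmu \bone^\tr ,
\]
using $\tfrac2n J + (Ld)\bone^\tr = 2\tau\bone^\tr$. Transposing, and using that $R$ and $L$ are symmetric, gives
\[
RL = -2I + 2\,\bone\boldmu^\tr .
\]
Apply this to $\boldmu$ and use $\boldmu^\tr\boldmu$ as a scalar:
\[
RL\boldmu = -2\boldmu + 2(\boldmu^\tr\boldmu)\bone .
\]
This is not yet of the form (a), so we switch to the other product. Apply $LR = -2I + 2\boldmu\bone^\tr$ to $\boldmu$:
\[
L(R\boldmu) = -2\boldmu + 2\boldmu(\bone^\tr\boldmu) = -2\boldmu + 2\boldmu = 0 .
\]
Since $G$ is connected, $\ker L = \operatorname{span}(\bone)$, so $R\boldmu = \lambda\bone$ for some $\lambda$. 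This proves (a).

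\emph{Step 4: uniqueness.} Suppose $\boldmu'$ also satisfies (a) and (b), with constant $\lambda'$. Then $R(\boldmu - \boldmu') = (\lambda - \lambda')\bone$ and $\bone^\tr(\boldmu - \boldmu') = 0$. Apply $L$ to the first relation and use $LR = -2I + 2\boldmu\bone^\tr$:
\[
-2(\boldmu - \boldmu') + 2\boldmu \cdot 0 = L\bigl((\lambda-\lambda')\bone\bigr) = 0 .
\]
Hence $\boldmu = \boldmu'$.

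The main obstacle is the bookkeeping in Step 1, namely establishing the identity for $LR$ with the correct constants. Everything after that is formal.
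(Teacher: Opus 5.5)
Your proof is correct, and it takes a different route from the paper. The paper's proof is purely a citation. With $\boldtau = 2\boldmu$, it quotes Bapat's Equation (10), $R\boldtau = \frac12(\boldtau^\tr R\boldtau)\bone$, together with his computation $\bone^\tr\boldtau = 2$. It leaves uniqueness implicit; uniqueness would follow from the nonsingularity of $R$, which Bapat also establishes.

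You instead make the argument self-contained. From $R = d\bone^\tr + \bone d^\tr - 2L^+$ and $LL^+ = I - \frac1n J$ you derive $LR = -2I + 2\boldmu\bone^\tr$. Up to scaling, this is the identity $LR + 2I = \boldtau\bone^\tr$ familiar from Bapat's work. Along the way you identify $\boldmu = \frac1n\bone + \frac12 Ld$ by comparing diagonals. Then (a), (b) and uniqueness all follow from this one identity together with $\ker L = \operatorname{span}(\bone)$.

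Your uniqueness step is stronger than your opening sentence announces. It shows that every $x$ with $Rx \in \operatorname{span}(\bone)$ equals $(\bone^\tr x)\boldmu$, without any appeal to invertibility of $R$. So you should change the sentence that promises to show $R$ is nonsingular.

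As for what each route buys: yours costs only two standard facts about $L^+$ and gives an explicit formula for the curvature. The paper's is shorter and comes with $\lambda = \boldmu^\tr R\boldmu$, though you also get that in one line by pairing (a) with $\boldmu$.

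In the write-up, delete the two dead ends. The abandoned display in Step 1 is wrong as written; done correctly, that computation gives $\sum_{e\ni v_i} r(e^+,e^-) = 2(LL^+)_{ii} - (Ld)_i$, with a plus sign on the $LL^+$ term. The $RL$ detour in Step 3 is never used.
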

\begin{proof}
Let $\boldtau = 2\boldmu$.
In \cite[Equation (10)]{bapat-resmatrix} Bapat shows that
$\displaystyle
	R \boldtau = \frac{\boldtau^\tr R \boldtau}{2} \bone,
$
which is equivalent to 
$
	R \boldmu = (\boldmu^\tr R \boldmu) \bone .
$
In  \cite[p. 77]{bapat-resmatrix} it is shown that $\bone^\tr \boldtau = 2$.

Alternatively, see Devriendt--Lambiotte~\cite[Equation (22), p. 19]{devriendt-lambiotte}.
\end{proof}

\subsection{Determinant expressions}

\begin{thm}[{Bapat~\cite[Theorem 4]{bapat-resmatrix}}]\label{thm:bapat-det}
    If $R$ is the resistance matrix of a graph, and $\boldmu$ is the curvature, then
    $\displaystyle
        \det R = \frac{(-2)^{n - 1}}{\kappa(G)} \boldmu^\tr R \boldmu .
    $
\end{thm}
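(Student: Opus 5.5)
The plan is to compute $R^{-1}$ explicitly in terms of the Laplacian $L$ of $G$ and the curvature $\boldmu$, and then take its determinant using the matrix-tree theorem.

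\emph{Step 1: the identity $LR = -2I + 2\boldmu\bone^\tr$.} Write $X = L^+$ for the Moore--Penrose pseudoinverse. Then $LX = I - \tfrac1n \bone\bone^\tr$, and $R_{ij} = X_{ii} + X_{jj} - 2X_{ij}$. Letting $d$ be the diagonal of $X$, we have $R = d\bone^\tr + \bone d^\tr - 2X$. Since $L\bone = 0$, this gives
\[
LR = (Ld)\bone^\tr - 2I + \tfrac2n \bone\bone^\tr = -2I + w\bone^\tr ,\qquad w = Ld + \tfrac2n\bone .
\]
I will identify $w$ with $2\boldmu$ without any explicit edge computation, using Proposition~\ref{prop:curv-res}.
\begin{itemize}
\item Multiplying on the left by $\bone^\tr$ and using $\bone^\tr L = 0$ gives $0 = -2\bone^\tr + (\bone^\tr w)\bone^\tr$, so $\bone^\tr w = 2$.
\item Transposing gives $RL = -2I + \bone w^\tr$.
\item Computing $RLR$ in the two possible ways gives $-2R + \bone (w^\tr R) = -2R + (Rw)\bone^\tr$. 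Hence $\bone (Rw)^\tr = (Rw)\bone^\tr$, which forces $Rw$ to be a constant multiple of $\bone$.
\end{itemize}
Thus $w/2$ satisfies both conditions (a) and (b) of Proposition~\ref{prop:curv-res}, and by uniqueness $w = 2\boldmu$.

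\emph{Step 2: inverting $R$.} Set $\lambda = \boldmu^\tr R \boldmu$, so that $R\boldmu = \lambda\bone$. I need $\lambda \neq 0$. My first try is the standard fact that $R$ is negative definite on $\bone^\perp$, since $x^\tr R x = -2x^\tr X x$ for $x\perp\bone$. Together with $\bone^\tr R\bone > 0$, this shows $R$ has exactly one positive eigenvalue and is nonsingular. Since $\boldmu \neq 0$, it follows that $R\boldmu = \lambda\bone \neq 0$, so $\lambda \neq 0$.

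Using $RL = -2I + 2\bone\boldmu^\tr$, we get $R(-\tfrac12 L) = I - \bone\boldmu^\tr$ and $R\,\boldmu\boldmu^\tr/\lambda = \bone\boldmu^\tr$. Adding these,
\[
R^{-1} = -\tfrac12 L + \tfrac1\lambda \boldmu\boldmu^\tr .
\]

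\emph{Step 3: the determinant.} The matrix $A = -\tfrac12 L$ is singular with kernel spanned by $\bone$. By the matrix-tree theorem, $\operatorname{adj}(L) = \kappa(G)\bone\bone^\tr$, so $\operatorname{adj}(A) = (-\tfrac12)^{n-1}\kappa(G)\bone\bone^\tr$. The rank-one determinant formula $\det(A + uv^\tr) = \det A + v^\tr \operatorname{adj}(A) u$ then gives
\[
\det R^{-1} = \tfrac1\lambda (-\tfrac12)^{n-1}\kappa(G)\,(\bone^\tr\boldmu)^2 = (-\tfrac12)^{n-1}\frac{\kappa(G)}{\lambda},
\]
using $\bone^\tr\boldmu = 1$. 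Inverting yields $\det R = (-2)^{n-1}\boldmu^\tr R\boldmu/\kappa(G)$, as claimed.

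The main obstacle is Step 1, specifically identifying $w$ with $2\boldmu$. The symmetry trick together with the uniqueness in Proposition~\ref{prop:curv-res} should avoid a direct and messy computation of $Ld$. The nonvanishing of $\lambda$ is a secondary point, which the conditional negative definiteness of $R$ should settle.
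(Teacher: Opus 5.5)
Your proof is correct, but it follows Bapat's original strategy rather than the paper's sketch. The paper mentions that strategy explicitly and then deliberately bypasses it.

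\textbf{The paper's route.} The paper never writes down $R^{-1}$. It uses $\det R/\cof R = 1/(\bone^\tr R^{-1}\bone)$, and identifies $1/(\bone^\tr R^{-1}\bone)$ with the maximum of $\boldu^\tr R\boldu$ over $\bone^\tr\boldu=1$ via Lagrange multipliers and the signature $(1,n-1)$ of $R$. By Proposition~\ref{prop:curv-res} this maximum is attained at $\boldmu$. It then imports the sum-of-cofactors identity $\cof R=(-2)^{n-1}/\kappa(G)$ from Bapat--Sivasubramanian.

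\textbf{Your route.} You derive $LR=-2I+2\boldmu\bone^\tr$ from $L^+$ and obtain the explicit inverse $R^{-1}=-\tfrac12 L+\tfrac1\lambda\boldmu\boldmu^\tr$. You finish with the rank-one determinant lemma and the matrix-tree theorem. Each step checks out, including the symmetry trick forcing $Rw\in\RR\bone$ and the signature argument giving $\lambda\neq 0$.

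\textbf{What your route buys.} It is more self-contained: the only graph-theoretic input is the matrix-tree theorem. The cofactor identity the paper cites comes out as a byproduct, since $\cof R=\det R\cdot\bone^\tr R^{-1}\bone=\det R/\lambda$. The conditional negative definiteness you prove in Step~2 also supplies the signature fact the paper relies on. It further gives the uniqueness in Proposition~\ref{prop:curv-res} that you invoke in Step~1: if $\boldmu,\boldmu'$ both satisfy (a) and (b), then $(\boldmu-\boldmu')^\tr R(\boldmu-\boldmu')=0$ with $\boldmu-\boldmu'\perp\bone$. It would be cleaner to prove negative definiteness first. Alternatively, $w=2\boldmu$ is a short direct check: the diagonal identity $(LX)_{ii}=1-\tfrac1n$ gives $(Ld)_i+\tfrac2n=2-\sum_{e\in N(v_i)}r(e^+,e^-)$.

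\textbf{What the paper's route buys.} It is shorter given the citation. More importantly, it exhibits $\boldmu^\tr R\boldmu$ as the constrained maximum $2\gamma(G)$, which is exactly the form fed into \eqref{eq:gamma-1} in the proofs of Theorems~\ref{thm:main} and~\ref{thm:main-alt}.

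One small point: your argument tacitly assumes $n\ge 2$, since you use $\bone^\tr R\bone>0$. The case $n=1$ is trivial, because both sides vanish.
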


In Bapat's notation, this result in stated in terms of $\boldtau = 2\boldmu$, with the identity
\[
	\det R = \frac{(-2)^{n - 3}}{\kappa(G)} \boldtau^\tr R \boldtau .
\]
Bapat proves Theorem~\ref{thm:bapat-det} by first finding an expression for the inverse matrix $R^{-1}$, and then computing the determinant of $R^{-1}$.
Here, for the convenience of the reader, we sketch an alternative argument, along the lines used in \cite{RSW-tree-distance}.
\begin{proof}[Proof of Theorem~\ref{thm:bapat-det}]
First, note that
\[
	\frac{\det R}{\cof R} = \frac{1}{\bone^\tr (R^{-1}) \bone},
\]
where $\cof A$ denotes the {\em sum of cofactors} $\sum_{i, j} (-1)^{i + j} \det A_{i, j}$.
Then, 
we may argue that
\[
	\frac{1}{\bone^\tr (R^{-1}) \bone} = \max\{ \boldu^\tr R \boldu : \boldu \in \RR^V,\, \bone^\tr \boldu = 1 \}
\]
by using the method of Lagrange multipliers and the fact that $R$ has signature $(1, n-1)$.\footnote{We can avoid reference to the signature of $R$ by replacing ``maximum'' with ``critical value''.}
Furthermore, the maximum value of $\boldu \mapsto \boldu^\tr R \boldu $ is achieved precisely at the curvature vector $\boldmu$ due to Proposition~\ref{prop:curv-res}.
It then follows that
\[
	\frac{\det R}{\cof R} = \boldmu^\tr R \boldmu .
\]
Finally, we use the sum-of-cofactors identity $\cof R = (-2)^{n - 1} / \kappa(G)$ which is a special case of \cite[Theorem 7]{bapat-sivasubramanian}.
\end{proof}
%\end{rmk}

\section{Proofs}\label{sec:proofs}
We now prove our main theorem.

\begin{proof}[Proof of Theorem~\ref{thm:main}]
A result of Bapat~\cite[Theorem 4]{bapat-resmatrix} states that
\begin{equation}\label{eq:bapat-det}
	\det R = \frac{(-2)^{n - 1}}{\kappa(G)} \boldmu^\tr R \boldmu ,
\end{equation}
where $\boldmu$ is the curvature vector (see \S\ref{sec:curvature}).
By \cite[Proposition 5.3]{RSW-two-forest}, we have the identity
\begin{equation}\label{eq:gamma-1}
	\boldmu^\tr R \boldmu = 2 \gamma(G) \; := \;  \max\{ \boldu^\tr R \boldu : \boldu \in \RR^V,\, \bone^\tr \boldu = 1 \}.
\end{equation}
Finally, using \cite[Theorem A]{RSW-two-forest}  we have
\begin{equation}\label{eq:gamma-2}
	\gamma(G) = \frac{1}{3} \frac{\kappa_2(G)}{\kappa(G)} - \frac{1}{12} \sum_{e \in E} r(e^+, e^-)^2.
\end{equation}
Together, \eqref{eq:bapat-det}, \eqref{eq:gamma-1}, and \eqref{eq:gamma-2} show that
\[
	\det R = \frac{(-1)^{n - 1} 2^{n}}{\kappa(G)} \left(\frac{1}{3} \frac{\kappa_2(G)}{\kappa(G)} - \frac{1}{12} \sum_{e \in E} r(e^+, e^-)^2 \right) .
\]
This proves the desired result, after making use of the resistance identity \eqref{eq:res-trees-edge} for $r(e^+, e^-)$.
\end{proof}

\begin{proof}[Proof of Theorem~\ref{thm:main-alt}]
We follow the same steps as above, except that in place of \eqref{eq:gamma-2} we instead use \cite[Proposition 5.4 (c)]{RSW-two-forest}
\begin{equation}
	\gamma(G) = \frac{1}{4} \sum_{e \in E} (r(e^+,q) - r(e^-, q))^2 . \qedhere
\end{equation}
\end{proof}

\subsection{Edge-weighted graphs}\label{sec:weighted}

%\begin{rmk}\label{rmk:weighted}
If $G = (V, E)$ is a connected graph equipped with edge resistances $\{ \alpha_e : e \in E \}$, then the effective resistance $R_{ij}$ between vertices $v_i$ and $v_j$ satisfies
$\displaystyle
	R_{ij} = \frac{\kappa(G/ij, \alpha)}{\kappa(G, \alpha)},
$
where 
\[\displaystyle 
	\kappa(G, \alpha) = \sum_{T \in \trees(G)} \prod_{e \not \in T} \alpha_e 
\]
and $\trees(G)$ denotes the set of spanning trees of $G$,
and $G / ij$ denotes the same quotient graph as earlier.
Similarly, we let $\kappa_2(G, \alpha)$ denote the weighted sum 
\[\displaystyle 
	\kappa_2(G, \alpha) = \sum_{F \in \forests_2(G)} \prod_{e \not \in F} \alpha_e .
\]
over the set of two-component spanning forests $\forests_2(G)$ \cite[Sections 15-17]{biggs}.
%\begin{thm}
%\label{thm:w-main}
%Suppose $G = (V,E)$ is a finite graph, with edge resistances $\{\alpha_e : e \in E\}$.
Then, we have the identity
\begin{equation}\label{eq:w-main}
    \det R =  \frac{(-1)^{n-1} 2^{n} {\prod_{E} \alpha_e}}{\kappa(G, \alpha)}  \left( \frac13 \frac{\kappa_2(G, \alpha)}{\kappa(G, \alpha)} - \frac{1}{12} \sum_{e\in E} \alpha_e \frac{\kappa(G/e, \alpha)^2 }{\kappa(G, \alpha)^2} \right).
\end{equation}
Alternatively, the weighted version of Theorem~\ref{thm:main-alt} is
\begin{equation}\label{eq:w-main-alt}
    \det R = \frac{(-1)^{n-1} 2^{n - 2} \prod_{E} \alpha_e}{\kappa(G, \alpha)} \sum_{\substack{e\in E \\ e = (i, j)}} \frac{(R_{ik} - R_{jk})^2 }{\alpha_e} ,
\end{equation}
for any vertex index $k$.
Some examples (Examples~\ref{ex:triangle} and \ref{ex:theta}) are given in \S\ref{sec:examples}.
%\end{rmk}

\subsection{Edge-transitive graphs}

We note that Theorem~\ref{thm:main} can be further simplified if $G$ is sufficiently symmetric.
Recall that $G$ is {\em edge-transitive} if, for any $e, e' \in E(G)$, there is some automorphism of $G$ that sends $e$ to $e'$.
For a nice exposition on edge-transitive graphs, and other symmetry classes, see Biggs~\cite[Chapter 15]{biggs-graph}.

The next result follows from Theorem~\ref{thm:main}.
\begin{cor}\label{cor:edge-transitive}
If $G$ is an edge-transitive graph with $n$ vertices and $m$ edges, then
\begin{equation}
	\det R = \frac{(-1)^{n - 1} 2^n}{\kappa(G)} \left(\frac{1}{3} \frac{\kappa_2(G)}{\kappa(G)} - \frac{1}{12} \frac{(n - 1)^2}{m}\right) . 
\end{equation}
\end{cor}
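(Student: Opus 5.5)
Starting from Theorem~\ref{thm:main}, the only term that needs to be simplified is $\sum_{e \in E} \kappa(G/e)^2/\kappa(G)^2 = \sum_{e\in E} r(e^+,e^-)^2$, using \eqref{eq:res-trees-edge}. The plan has two parts. First, edge-transitivity forces all edge resistances to be equal. Second, Foster's theorem identifies the common value.

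For the first part, let $e, e' \in E$ and take an automorphism $\phi$ of $G$ with $\phi(e) = e'$. Effective resistance is defined intrinsically from the graph, for instance as the ratio $\kappa(G/ij)/\kappa(G)$. An automorphism induces a bijection between the spanning trees of $G/e$ and those of $G/\phi(e)$, so $\kappa(G/e) = \kappa(G/e')$. Hence $r(e^+,e^-) = \rho$ is a constant independent of $e$, and the sum equals $m\rho^2$. Parallel edges cause no difficulty, since the argument only uses contraction of the given edge.

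For the second part, we need Foster's first theorem, $\sum_{e\in E} r(e^+,e^-) = n-1$. In the paper's notation this can be proved by noting that $\sum_e \kappa(G/e) = \sum_T |T| = (n-1)\kappa(G)$. Indeed, $\kappa(G/e)$ counts the spanning trees of $G$ that contain $e$, because spanning trees of $G/e$ correspond bijectively to spanning trees of $G$ through $e$; this also holds with parallel edges, provided loops created by contraction are discarded. Alternatively, it follows from Proposition~\ref{prop:curv-res}(b): summing $\boldmu_i = 1 - \frac12\sum_{e\in N(v_i)} r(e^+,e^-)$ over $i$ counts each edge twice and gives $1 = n - \sum_e r(e^+,e^-)$. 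Either way, $m\rho = n-1$, so $\rho = (n-1)/m$ and $\sum_e \rho^2 = (n-1)^2/m$.

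Substituting this into \eqref{eq:main} gives exactly the claimed formula. The only step that needs any care is the invariance of $\kappa(G/e)$ under automorphisms, and that is routine. The second derivation of Foster's identity via Proposition~\ref{prop:curv-res} is the cleanest, since it uses only results already stated in the paper.
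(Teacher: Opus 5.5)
Your proposal is correct and follows the paper's proof. Edge-transitivity makes $\kappa(G/e)$ independent of $e$, and the ``straightforward counting argument'' the paper alludes to is exactly your identity $\sum_{e} \kappa(G/e) = (n-1)\kappa(G)$, which gives $r(e^+,e^-) = (n-1)/m$. Your alternative via $\bone^\tr\boldmu = 1$ is also valid, but by the definition of $\boldmu$ it is literally equivalent to Foster's identity rather than an independent derivation.
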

\begin{proof}
Since $G$ is edge-transitive, $\kappa(G / e) = \kappa(G / e')$ for any two edges $e, e' \in E(G)$.
It follows from a straightforward counting argument that in this case,
$\displaystyle
\frac{\kappa(G / e)}{\kappa(G)} = \frac{|V| - 1}{|E|}
$ 
for every edge $e$.
\end{proof}

% In this case, the denominator of $\det R$ must be a divisor of $\mathrm{lcm}(3 \kappa^2, 12 \kappa |E|) = 3 \kappa \cdot \mathrm{lcm}(\kappa, 4 |E|)$.

\section{Examples}\label{sec:examples}

\begin{eg}[House graph]
Consider the house graph, shown in Figure~\ref{fig:house}.
This graph has $\kappa(G) = 11$ spanning trees, and $\kappa_2(G) = 19$ two-component spanning forests.

\begin{figure}[h]
\centering
	\begin{tikzpicture}[scale=0.5]
	\coordinate (1) at (0,1.8);
	\coordinate (2) at (0,0);
	\coordinate (3) at (2,0);
	\coordinate (4) at (2,1.8);
	\coordinate (5) at (1,3);
	
	\foreach \c in {1,2,3,4,5} {
		\filldraw[black] (\c) circle (2pt);
	}

	\node[left] at (1) {$1$};
	\node[left] at (2) {$2$};
	\node[right] at (3) {$3$};
	\node[right] at (4) {$4$};
	\node[above right] at (5) {$5$};
	
	\draw (1) -- (2) -- (3) -- (4) -- cycle;
	\draw (1) -- (5) -- (4);
	\end{tikzpicture}
	\caption{House graph.}
	\label{fig:house}
\end{figure}

Its resistance matrix is
\[
	R = \frac{1}{11}\begin{pmatrix}
	0 & 8 & 10 & 6 & 7 \\
	8 & 0 & 8 & 10 & 13 \\
	10 & 8 & 0 & 8 & 13 \\
	6 & 10 & 8 & 0 & 7 \\
	7 & 13 & 13 & 7 & 0
	\end{pmatrix} .
\]
A priori, we would expect that $\det R$ has $11^5$ as its denominator.
Instead, due to cancellation in accordance with Theorem~\ref{thm:main} we have
\begin{equation*}
	\det R = \frac{1360}{11^3} \,.
\end{equation*}
In terms of the theorem,
\[
	\det R = \frac{2^5}{11} \left(\frac{1}{3} \cdot \frac{19}{11} - \frac{1}{12} \cdot\frac{6^2 + 7^2 + 7^2 + 8^2 + 8^2 + 8^2}{11^2}\right) .
\]
Applying Theorem~\ref{thm:main-alt} with $k = 1$, we have
\[
	\det R = \frac{2^3}{11} \left(\frac{8^2 + 6^2 + 7^2 + (10 - 8)^2 + (10 - 6)^2 + (7 - 6)^2}{11^2}\right).
\]
\end{eg}

\begin{eg}[Cube graph]
The cube graph has 8 vertices and 12 edges; see Figure~\ref{fig:cube}.
This graph has $\kappa(G) = 384$ spanning trees.
For the number of two-forests, we have:
\begin{align*}
 \kappa_2(G) &= \#(\text{all 6-edge subgraphs}) - \#(\text{those containing cycles}) \\
 &= \binom{12}{6} - \left(6 \cdot \binom{8}{2} + 12 + 4\right) = 740 .
\end{align*}
Its resistance matrix is
\[
	R = \frac{1}{12}\begin{pmatrix}
	0 & 7 & 9 & 7 &  9 & 10 & 9 & 7 \\
	7 & 0 & 7 & 9 &  10 & 9 & 7 & 9 \\
	9 & 7 & 0 & 7 &  9 & 7 & 9 & 10 \\
	7 & 9 & 7 & 0 &  7 & 9 & 10 & 9 \\
	
	9 & 10 & 9 & 7 &  0 & 7 & 9 & 7 \\
	10 & 9 & 7 & 9 &  7 & 0 & 7 & 9 \\
	9 & 7 & 9 & 10 &  9 & 7 & 0 & 7 \\
	7 & 9 & 10 & 9 &  7 & 9 & 7 & 0
	\end{pmatrix} .
\]
We have $\displaystyle \det R = - \frac{86\, 593\, 536}{12^8} = - \frac{29}{12^2}$.

\begin{figure}[h]
\centering
\begin{tikzpicture}
	\coordinate (A) at (0,0);
	\coordinate (C) at (2.4,0);
	\coordinate (D) at (0,2);
	\coordinate (F) at (2.4,2);
	\coordinate (AA) at (-1,1);
	\coordinate (CC) at (1.4,1);
	\coordinate (DD) at (-1,3);
	\coordinate (FF) at (1.4,3);
	
	\draw (AA) -- (CC) -- (FF) -- (DD) -- cycle;
	% \draw[line width=6pt, color=white] (E) -- (G) -- (GG);
	% \draw (BB) -- (EE);
	\draw[line width=6pt, color=white] (A) -- (D) -- (F);
	\draw (A) -- (C) -- (F)  -- (D) -- cycle;
	% \draw (B) -- (E);
	\draw (A) -- (AA);
	\draw (C) -- (CC);
	\draw (D) -- (DD);
	\draw (F) -- (FF);
	% \draw (G) -- (GG);
	
	\foreach \p in {A,C,D,F,AA,CC,DD,FF} {
		\fill (\p) circle (2pt);
	}
	\foreach \p/\n/\dir in {A/1/left,C/2/right,F/3/right,D/4/left,DD/5/left,FF/6/right,CC/7/right,AA/8/left} {
		\node[\dir] at (\p) {\n};
	}
\end{tikzpicture}
\caption{Cube graph}
\label{fig:cube}
\end{figure}

Note that the cube graph is edge-transitive.
In terms of Corollary~\ref{cor:edge-transitive},
\[
	\det R = - \frac{2^8}{384} \left(\frac{1}{3} \cdot \frac{740}{384} - \frac{1}{12} \cdot \frac{7^2}{12}\right) .
\]
In terms of Theorem~\ref{thm:main-alt}, with $k = 1$, we have
\[
	\det R = - \frac{2^6}{384} \left(\frac{3 \times 7^2 + 6 \times 2^2 + 3 \times 1^2}{12^2} \right) .
\]
\end{eg}

\begin{eg}[Triangle graph]\label{ex:triangle}
Suppose $G$ is the triangle graph with general edge weights $a$, $b$, and $c$.
Then the resistance matrix is
\[
    R = \frac{1}{a + b + c} \begin{pmatrix}
        0 & ab + ac & ab + bc \\
        ab + ac & 0 & ac + bc \\
        ab + bc & ac + bc & 0
    \end{pmatrix}.
\]
Its determinant is
\[
    \det R = \frac{2 abc (a + b)(a + c)(b + c)}{(a + b + c)^3} .
\]
The curvature vector for this graph is 
\[
    \boldmu = \frac{1}{2(a + b + c)}\begin{pmatrix}
        a + b \\
        a + c \\
        b + c
    \end{pmatrix}.
\]
This graph has $\kappa(G; \alpha) = a + b + c$ and $\kappa_2(G; \alpha) = ab + ac + bc$.
The weighted version of the main theorem \eqref{eq:w-main} states that we have
\[
	\det R = 2^3 \cdot \frac{abc}{a + b + c} \left(\frac13 \cdot \frac{ab + ac + bc}{a + b + c} - \frac{1}{12} \cdot \frac{a(b + c)^2 + b(a + c)^2 + c(a + b)^2}{(a + b + c)^2} \right)
\]
Alternatively, if we use \eqref{eq:w-main-alt} with $k = 1$,
%the weighted version of Theorem~\ref{thm:main-alt}, 
we have
\[
	\det R = 2 \cdot \frac{abc}{a + b + c} \left( \frac{1}{a}\cdot \frac{(ab + ac)^2}{(a + b + c)^2} + \frac{1}{b}\cdot \frac{(ab + bc)^2}{(a + b + c)^2} + \frac{1}{c}\cdot \frac{(ac - bc)^2}{(a + b + c)^2}\right) .
\]
\end{eg}

\begin{eg}[Theta graph]\label{ex:theta}
Consider the theta graph, i.e. the unique multigraph with two vertices, three edges, and no loops.
For the theta graph with general edge weights $a$, $b$, and $c$, the resistance matrix is
\[
	R = \frac{1}{ab + ac + bc} \begin{pmatrix}
	0 & abc \\
	abc & 0
	\end{pmatrix}.
\]
so $\displaystyle \det R = - \left(\frac{abc}{ab + ac + bc}\right)^2$.
This graph has $\kappa(G; \alpha) = ab + ac + bc$ and $\kappa_2(G; \alpha) = abc$.
The main theorem \eqref{eq:w-main} states that we have
\[
	\det R = -2^2 \cdot \frac{abc}{ab + ac + bc} \left(\frac13 \cdot \frac{abc}{ab + ac + bc} - \frac{1}{12} \cdot \frac{a b^2 c^2 + a^2 b c^2 + a^2 b^2 c}{(ab + ac + bc)^2} \right) .
\]
\end{eg}

%\begin{eg}[Dipole graph]
%Consider the multigraph with two vertices, $n$ edges with $n \geq 3$, and no loops.
%This graph is known as the dipole graph, banana graph, or multi-loop sunset graph \todo{cite} in various places in the literature.
%
%For the dipole graph with edge weights $\alpha_1, \ldots, \alpha_n$, 
%the weighted counts of spanning trees and two-forests are
%$\kappa(G; \alpha) = \sum_{i = 1}^n \prod_{j \neq i} \alpha_j$
%and $\kappa_2(G; \alpha) = \prod_{i = 1}^n \alpha_i$.
%
%the resistance matrix is
%\[
%	R = \frac{1}{\kappa} \begin{pmatrix}
%	0 & \kappa_2 \\
%	\kappa_2 & 0
%	\end{pmatrix}
%\]
%where ...
%so $\det R = - (\kappa_2 / \kappa)^2$.
%The weighted version of the main theorem \eqref{eq:w-main} states that
%\[
%	\det R = \frac{- 2^2 \prod_i \alpha_i}{\kappa(G; \alpha)}\left ( \frac13 \frac{\kappa_2}{\kappa} - \frac{1}{12} \frac{\alpha_1 \prod_{j \neq 1} \alpha_j + \alpha_2 (\prod_{j \neq 2} \alpha_j)^2 + \cdots}{\kappa^2} \right)
%\]
%\end{eg}

\section*{Acknowledgments}
HR would like to thank Yen-Jen Cheng, Sen-Peng Eu, and Yuan-Hsun Lo for their invitation to speak at the ILAS 2025 Minisymposium on ``Enumerative and algebraic combinatorics and matrices'' at National Sun Yat-Sen University.
The authors would like to thank Ravindra Bapat and Karel Devriendt for helpful discussion.
FS was partially supported by NSF CAREER DMS-2044564 grant.
CW was partially supported by Simons Collaboration Grant 850685.

\bibliography{resistance-det-ref} 
\bibliographystyle{abbrv}

\end{document}